\documentclass[11pt]{article}

\usepackage[T1]{fontenc}
\usepackage[utf8]{inputenc}
\usepackage{lmodern}
\usepackage{microtype}
\usepackage{amsmath,amssymb,amsthm,mathtools}
\usepackage{booktabs}
\usepackage{enumitem}
\usepackage[numbers,sort&compress]{natbib}
\usepackage[hidelinks]{hyperref}
\usepackage[nameinlink,noabbrev]{cleveref}
\usepackage[a4paper,margin=1in]{geometry}

\newtheorem{theorem}{Theorem}[section]
\newtheorem{lemma}[theorem]{Lemma}

\theoremstyle{definition}

\newcommand{\M}{\mathcal{M}}
\newcommand{\GG}{\mathsf{G}}
\newcommand{\UU}{\mathsf{U}}
\newcommand{\TV}{\mathrm{TV}}
\newcommand{\Prob}{\mathbb{P}}
\newcommand{\E}{\mathbb{E}}

\title{Greedy Uniformity on Trees:\\Exact Obstruction and Near-Uniform Spiders}
\author{John Fairfax-Ball}
\date{October 1, 2026}

\begin{document}
\maketitle

\begin{abstract}
Choose a uniformly random ordering of the vertices of a finite tree and run the usual greedy maximal-independent-set algorithm.  We compare the resulting law on maximal independent sets with the uniform law.  We prove that exact uniformity occurs only for the one-vertex tree and the single edge.  The proof is structural: a diameter endpoint exposes a pendant star, and the remaining one-pendant-leaf case is resolved by a strict injection between exact permutation fibres obtained by swapping the pendant leaf with its support vertex.  Exact uniformity is therefore rigid, but it can be approached closely.  For an explicit mixed-spider family $T_{k,l}$ we count the maximal independent sets and compute the exact probability of every output.  With $l=2^k-k$ the total-variation bias is positive and satisfies
\[
 b(T_{k,2^k-k})=O\!\left(\frac{\sqrt{k}}{4^k}\right)
 =O\!\left(\frac{\sqrt{\log n_k}}{n_k^2}\right),
 \qquad n_k=2^k+k+1.
\]
The theorem package has also been formalised in Lean and registered with Palomar.  These records document machine-checked formal verification and the checked axiom boundary; they are not peer review or a certificate of novelty.
\end{abstract}

\section{Introduction}

A random ordering followed by a greedy scan is one of the simplest ways to construct a maximal independent set.  Given a finite graph, scan the vertices in uniformly random order and accept a vertex exactly when none of its previously accepted neighbours has been accepted.  The output is always a maximal independent set, but different maximal independent sets can have very different numbers of generating orders.  The question studied here is when, for a tree, these permutation fibres happen to have equal size.

Write $\M(T)$ for the maximal independent sets of a finite nonempty tree $T$, let $\GG_T$ be the law of the greedy output, and let $\UU_T$ be uniform on $\M(T)$.  Our first result gives a complete obstruction to exact equality.

\begin{theorem}[Exact obstruction]\label{thm:A}
For every finite nonempty tree $T$,
\[
 \GG_T=\UU_T
 \quad\Longleftrightarrow\quad
 T\cong K_1\ \text{or}\ K_2.
\]
Equivalently, if
\[
 b(T)=d_{\TV}(\GG_T,\UU_T)
 =\frac12\sum_{I\in\M(T)}
 \left|\GG_T(I)-\frac1{|\M(T)|}\right|,
\]
then $b(T)=0$ if and only if $T\cong K_1$ or $K_2$.
\end{theorem}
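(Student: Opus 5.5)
The easy direction is a direct check. For $K_1$ there is one maximal independent set, so $\GG_T=\UU_T$ trivially. For $K_2$ the two maximal independent sets are the two singletons, and each arises from exactly half of the $2$ orderings.

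For the converse, assume $T$ has at least three vertices, so its diameter is at least $2$. Take a diameter path $v_0v_1\cdots v_d$ and let $s=v_1$, the support vertex of the endpoint $v_0$. By maximality of the path, every neighbour of $s$ other than $v_2$ is a leaf; otherwise a longer path would exist. Let $L$ be the set of leaves adjacent to $s$ and $m=|L|\ge 1$. (If $d=2$, then $T$ is a star and every neighbour of $s$ is a leaf; I would treat this case directly.) We compare fibre sizes $N(I)=\#\{\sigma: \text{greedy}(\sigma)=I\}$. Note that every $I\in\M(T)$ either contains $s$, or contains all of $L$.

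\textbf{Case $m\ge 2$ (pendant star).} For a star $K_{1,m}$ with $m\ge2$ there are two maximal independent sets. The centre is chosen with probability $1/(m+1)$ and the leaf set with probability $m/(m+1)\neq 1/2$, which settles that case. In general, I would pick $J\in\M(T)$ with $s\in J$ and compare it with $J'=(J\setminus\{s\})\cup L\cup(\ldots)$, chosen suitably so that $J'$ is still maximal. The output contains $s$ only if $s$ precedes all of $L$ in the ordering, which should bias fibre sizes by a factor like $1/(m+1)$ versus $m/(m+1)$ after conditioning on the rest of the tree. The cleanest route is to sum over the two classes. Let $A=\{I: s\in I\}$ and $B=\{I: L\subseteq I\}$. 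One shows $\GG_T(A)\le \Prob(s \text{ precedes } L)=1/(m+1)$. Uniformity would force $\GG_T(A)=|A|/|\M(T)|$. Counting the sets, the map $I\mapsto$ ``replace $s$ by $L$ and repair'' injects $A$ into $B$. Getting a strict contradiction from these two facts is delicate, so I would instead aim for a local fibre inequality exhibiting a specific pair $I\in A$, $I'\in B$ with $N(I')>N(I)$.

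\textbf{Case $m=1$ (one pendant leaf $\ell$ at $s$).} This is the main obstacle, as the abstract indicates. The plan is to exhibit two maximal independent sets with different fibre sizes through the swap $\tau=(\ell\ s)$ acting on orderings. Take $I\ni s$ and let $I'=(I\setminus\{s\})\cup\{\ell\}\cup X$, where $X$ consists of the vertices of $N(s)\setminus\{\ell\}$ (here just $v_2$, or the needed repairs) that become undominated. I would choose $I$ so that no repair is needed, for instance with $v_2$ dominated by some other vertex of $I$; then $I'=I\triangle\{s,\ell\}$ is maximal. Now map each $\sigma$ with greedy$(\sigma)=I$ to $\tau\sigma$. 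When $\ell$ is processed first the outputs swap exactly. When $s$ comes first among $\{s,\ell\}$, $s$ is accepted provided $v_2$ is not yet accepted. One checks that $\tau\sigma$ yields $I'$, so the map sends fibre$(I)$ into fibre$(I')$. Strictness comes from an ordering producing $I'$ in which $v_2$ is accepted before both $s$ and $\ell$ are seen. Such an ordering produces $\ell$ regardless of the relative order of $s$ and $\ell$, so both it and its swap lie in fibre$(I')$, while only one of them can be an image. To realise this I need $v_2\in I'$, so I would instead choose $I$ with $v_2\notin I$ and $v_2$ dominated only by $s$; then $I'=I\triangle\{s,\ell\}$ would not be maximal. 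Hence I would set up the injection the other way: from fibre$(I)$ with $s\in I$, $v_2\notin I$, into fibre$(I')$ with $\ell,v_2$ handled consistently. I expect to spend the effort verifying that the swap map is well defined and injective, and that some ordering is missed. Either direction gives $N(I)\neq N(I')$, hence $\GG_T\neq\UU_T$.
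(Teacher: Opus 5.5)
There are two genuine gaps, one in each case. The easy direction, the diameter-end structure and the star case are fine.

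\textbf{Case $m\ge2$, $T$ not a star.} You never reach a contradiction, and the two facts you have cannot give one. The injection $A\hookrightarrow B$ yields $|A|\le|B|$, i.e.\ $\UU_T(A)\le\frac12$, which is perfectly consistent with $\GG_T(A)\le\frac1{m+1}$. What you need is a \emph{lower} bound on $\UU_T(A)$. Put $R=T-(L\cup\{s\})$ and $F=R-v_2=T-N[s]$. Then $|A|=|\M(F)|$ and $|B|=|\M(R)|$. The counting inequality $|\M(R)|\le 2|\M(R-v_2)|$ gives $\UU_T(A)\ge\frac13$. To prove it, split $\M(R)$ by whether $v_2$ is present, and inject $\M(R-N[v_2])$ into $\M(R-v_2)$ by extending each set to a maximal one. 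You also need the \emph{strict} bound $\GG_T(A)<\frac1{m+1}$, which matters at $m=2$: orders beginning $v_2,s$ have $s$ first in $L\cup\{s\}$ yet reject $s$. Then $\GG_T(A)<\frac1{m+1}\le\frac13\le\UU_T(A)$. Your fallback ``local fibre inequality'' is never constructed. Moreover, swapping $s$ with a leaf only compares fibres cleanly when the rest of $I$ fails to dominate $v_2$, which, as explained next, need not be arrangeable.

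\textbf{Case $m=1$.} Write $\mathcal F(I)$ for the fibre of $I$. Your first claim is false. If $I=\{s\}\cup A$ with $A$ dominating $v_2$, the swap $\tau$ does not map $\mathcal F(I)$ into $\mathcal F(I\triangle\{s,\ell\})$, because in an order from $\mathcal F(I)$ the vertex $v_2$ may be blocked only by $s$. Take $P_4=v_0v_1v_2v_3$ and $I=\{v_1,v_3\}$. The order $(v_1,v_2,v_0,v_3)$ outputs $I$, but its swap $(v_0,v_2,v_1,v_3)$ outputs $\{v_0,v_2\}$. In fact $|\mathcal F(\{v_1,v_3\})|=9>6=|\mathcal F(\{v_0,v_3\})|$, so no such inclusion can hold.

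Your repaired plan is correct: take $A\in\M(F)$ not dominating $v_2$, set $I'=\{\ell,v_2\}\cup A$, and show $\tau\mathcal F(I)\subsetneq\mathcal F(I')$ using an order starting with $v_2$. This is the paper's second sub-case. But such an $A$ need not exist: if $v_2$ has a leaf neighbour other than $s$ (as in $P_4$), every $A\in\M(F)$ contains it.

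The dominated case must be run in the opposite direction: $\tau\mathcal F(\{\ell\}\cup A)\subseteq\mathcal F(\{s\}\cup A)$. In an order producing $\{\ell\}\cup A$, the vertex $s$ is certified only by $\ell$. So $\ell\prec s$, and after the swap $s\prec\ell$ certifies $\ell$. The vertex $v_2$ must be certified by a vertex of $A$ (since $\ell\not\sim v_2$), and the swap does not move it. Strictness comes from an order beginning $s,v_2$ and then listing $A$. It produces $\{s\}\cup A$, while its swap begins $\ell,v_2$ and accepts $v_2$.
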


The obstruction is qualitative, so it is natural to ask whether nontrivial connected trees can nevertheless have very small bias.  They can.  Let $T_{k,l}$ be the tree with a centre $c$, $k$ length-two arms $c-u_i-v_i$, and $l$ further leaves adjacent directly to $c$.

\begin{theorem}[Mixed spiders]\label{thm:B}
Let $k\ge0$ and $l\ge1$, and put $N=2^k$.  The tree $T_{k,l}$ has $N+1$ maximal independent sets.  Its unique centre-containing maximal independent set has greedy probability
\[
 p_c=\frac1N\sum_{j=0}^k\binom{k}{j}\frac1{l+2j+1},
\]
and each particular non-centre maximal independent set containing exactly $j$ of the inner vertices $u_i$ has probability
\[
 q_j=\frac{l+2j}{N(l+2j+1)},
\]
with multiplicity $\binom{k}{j}$.

For $k\ge1$, under the tuning $l=2^k-k$, set $A=2^k+1$, let $J\sim\mathrm{Bin}(k,1/2)$, and put $W=2J-k$.  Then
\[
 b(T_{k,2^k-k})
 =\frac12\left(
 \E\frac{W^2}{A^2(A+W)}
 +\E\frac{|W|}{A(A+W)}
 \right)>0.
\]
Consequently, with $n_k=2^k+k+1$,
\[
 b(T_{k,2^k-k})
 =O\!\left(\frac{\sqrt{k}}{4^k}\right)
 =O\!\left(\frac{\sqrt{\log n_k}}{n_k^2}\right)
 \qquad (k\to\infty).
\]
\end{theorem}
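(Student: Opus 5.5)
The plan is to first describe $\M(T_{k,l})$ explicitly, then compute the greedy probability of each set directly from the random order, and finally substitute the tuning $l=2^k-k$ into the total-variation sum.

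\emph{Counting.} Let $I\in\M(T_{k,l})$. If $c\in I$, then every $u_i$ and every direct leaf is excluded. Maximality then forces every $v_i\in I$, so there is exactly one such set. If $c\notin I$, then every direct leaf must be in $I$, since its only neighbour is $c$. Because $l\ge1$, this already dominates $c$. On each arm exactly one of $u_i,v_i$ lies in $I$: at most one by independence, and at least one because otherwise $v_i$ would be undominated. Conversely, every one of these $2^k$ choices is a maximal independent set. This gives $N+1$ sets in total.

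\emph{Greedy probabilities.} Fix a non-centre set $S$, and let $J_S$ be the set of indices $i$ with $u_i\in S$, so $|J_S|=j$. I claim the greedy output equals $S$ exactly when the following three conditions hold.
\begin{enumerate}[label=(\roman*)]
\item For $i\in J_S$, $u_i$ precedes $v_i$.
\item For $i\notin J_S$, $v_i$ precedes $u_i$.
\item $c$ is not first among the $l+2j+1$ vertices consisting of $c$, the $l$ leaves, and $\{u_i,v_i\}$ for $i\in J_S$.
\end{enumerate}
For necessity, note that if $c$ is rejected, each arm is decided purely by which of its two vertices comes first. Given (i), the condition ``$c$ precedes $u_i$'' is the same as ``$c$ precedes both $u_i$ and $v_i$''. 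Moreover, $c$ is rejected precisely when some leaf or some $u_i$ with $i\in J_S$ comes before it. A $u_i$ with $i\notin J_S$ that comes before $c$ is harmless, because $v_i$ has already been accepted and so $u_i$ is rejected.

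For the probability, observe that the event ``$c$ is first in a given set'' is independent of the relative orders of the other vertices. Hence
\[
q_j=2^{-k}\Bigl(1-\frac1{l+2j+1}\Bigr),
\]
which is the stated formula. Then $p_c=1-\sum_j\binom kj q_j$ simplifies to the stated sum. The step I expect to need the most care is this characterisation of the event $\{\text{output}=S\}$, in particular the independence claim. I would check it by conditioning on the relative order of the $l+2j+1$ distinguished vertices.

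\emph{Bias.} With $l=2^k-k$ and $A=N+1$ we have $l+2j+1=A+W$. Using $N=A-1$, a one-line computation gives
\[
q_j-\frac1A=\frac{W}{NA(A+W)}.
\]
The multiplicity $\binom kj$ equals $N\,\Prob(J=j)$, so the non-centre sets contribute
\[
\sum_{I\ne I_c}\Bigl|\GG(I)-\frac1A\Bigr|=\E\frac{|W|}{A(A+W)}.
\]
Since the total deviation sums to zero,
\[
p_c-\frac1A=-\E\frac{W}{A(A+W)}.
\]
Now write $\frac{W}{A+W}=\frac WA-\frac{W^2}{A(A+W)}$ and use $\E W=0$. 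This gives
\[
p_c-\frac1A=\E\frac{W^2}{A^2(A+W)}\ge0,
\]
so the centre term needs no absolute value, and halving the sum yields the displayed formula for $b$. Positivity holds because $\Prob(W\ne0)>0$ when $k\ge1$, and $A+W\ge A-k>0$.

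\emph{Asymptotics.} For large $k$ we have $A+W\ge A-k\ge A/2$. Together with $\E W^2=k$ and $\E|W|\le\sqrt k$, this gives
\[
b=O\Bigl(\frac{k}{A^3}+\frac{\sqrt k}{A^2}\Bigr)=O\Bigl(\frac{\sqrt k}{4^k}\Bigr).
\]
Finally, $n_k=2^k+k+1=\Theta(2^k)$ and $k=\Theta(\log n_k)$, which gives the second form of the bound.
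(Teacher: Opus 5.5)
Your proposal is correct and follows the paper's proof essentially step for step: the same classification of $\M(T_{k,l})$, the same conditioning on the arm pattern in which $c$ is selected exactly when it is first among the $l+2j+1$ relevant vertices, and the same expectation manipulations ending with the bound $\E|W|\le\sqrt{k}$. The only differences are cosmetic. You obtain $p_c$ and $p_c-1/A$ by complementation, using that the total mass is $1$ and the total deviation is $0$, whereas the paper reads off $p_c=\E\frac{1}{A+W}$ directly. You also spell out the independence behind the factor $1/(l+2j+1)$ more explicitly than the paper does.
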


Thus exact flatness of the greedy terminal law is confined to the two trivial trees, while an explicit connected sequence approaches the flat law with the upper bound $O(\sqrt{\log n}/n^2)$ along its vertex counts.  We do not prove an all-$n$ upper bound, a matching lower bound, an extremal characterization, or optimality of the mixed-spider family.

The stochastic process itself is classical, as are random-priority and random-sequential-adsorption viewpoints.  A targeted literature audit described in \cref{sec:related} found no checked source containing either \cref{thm:A} or the formulas and tuning in \cref{thm:B}; because such searches cannot certify worldwide priority, we use the deliberately bounded assessment that the contribution-level results are \emph{plausibly new with bounded uncertainty}.

\section{Model and permutation fibres}\label{sec:model}

Let $G=(V,E)$ be a finite simple graph and let $\pi=(v_1,\dots,v_n)$ be an ordering of $V$.  Starting from the empty set, scan $v_1,\dots,v_n$ and accept $v_i$ if it has no accepted neighbour.  Denote the output by $\mathrm{Greedy}(G,\pi)$.  Every rejected vertex has an accepted neighbour at the moment it is scanned, so the final set is independent and dominates every vertex outside it; hence it is maximal independent.

For $I\in\M(G)$ define its \emph{permutation fibre}
\[
 \mathcal F_G(I)=\{\pi:\mathrm{Greedy}(G,\pi)=I\},
 \qquad g_G(I)=|\mathcal F_G(I)|.
\]
Since all $|V|!$ orders are equally likely,
\[
 \GG_G(I)=\frac{g_G(I)}{|V|!}.
\]
Therefore $\GG_G$ is uniform on $\M(G)$ exactly when $g_G(I)$ is constant over $I\in\M(G)$.

We shall use a finite order certificate.  Write $u\prec_\pi v$ when $u$ appears before $v$ in $\pi$.

\begin{lemma}[Order certificate]\label{lem:certificate}
Let $I\in\M(G)$.  Then $\mathrm{Greedy}(G,\pi)=I$ if and only if every $w\notin I$ has a neighbour $u\in I$ with $u\prec_\pi w$.
\end{lemma}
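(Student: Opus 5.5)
We prove the two implications separately. The only inputs are the definition of the greedy scan and the maximality of $I$. Throughout, $S$ denotes the set of accepted vertices as the scan proceeds.

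\emph{Forward direction.} Suppose $\mathrm{Greedy}(G,\pi)=I$ and take $w\notin I$. Then $w$ was rejected when it was scanned, so at that moment it had a neighbour $u$ that had already been accepted. Accepted vertices are never removed, so $u$ lies in the final output $I$. Since $u$ was accepted before $w$ was scanned, $u\prec_\pi w$.

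\emph{Reverse direction.} Assume the certificate condition holds. We show by induction along $\pi$ that, just after each vertex is scanned, $S$ equals $I$ restricted to the vertices scanned so far. Equivalently, each scanned vertex is accepted exactly when it lies in $I$. Suppose this holds before the scan of $v$. We consider two cases.
\begin{itemize}
\item If $v\in I$, every neighbour of $v$ lies outside $I$, because $I$ is independent. By the inductive hypothesis, none of those neighbours has been accepted. Hence $v$ has no accepted neighbour and is accepted.
\item If $v\notin I$, the certificate gives a neighbour $u\in I$ with $u\prec_\pi v$. By the inductive hypothesis $u$ was accepted, so $v$ is rejected.
\end{itemize}
After the last vertex is scanned, the induction gives $S=I$, so $\mathrm{Greedy}(G,\pi)=I$.

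The argument has no real obstacle. The one point needing care is the reverse direction. A naive argument would say each $w\notin I$ is rejected because it has an earlier neighbour in $I$, but that needs the neighbour to have actually been accepted. The induction hypothesis, that exactly the vertices of $I$ are accepted so far, supplies this, and it also shows that no accepted vertex outside $I$ can block a vertex of $I$. Independence of $I$ is used only in the case $v\in I$. Maximality of $I$ is not needed for the reverse direction itself, since the certificate already forces domination.
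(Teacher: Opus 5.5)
Your proof is correct and takes essentially the same approach as the paper. The forward direction is identical, and your induction along $\pi$ (the accepted set equals $I$ restricted to the scanned vertices) is the direct form of the paper's minimal-counterexample argument, which takes the earliest accepted vertex outside $I$ and uses independence of $I$ to show that its earlier witness in $I$ must already have been accepted.
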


\begin{proof}
If the output is $I$, every $w\notin I$ was rejected by a previously accepted neighbour, which necessarily belongs to $I$.

Conversely, assume the stated condition and suppose that some vertex outside $I$ is accepted.  Choose the earliest accepted such vertex $w$.  By hypothesis there is $u\in I\cap N(w)$ with $u\prec_\pi w$.  If $u$ had been rejected, it would have had an earlier accepted neighbour.  Since $I$ is independent, that neighbour would lie outside $I$, contradicting the choice of $w$.  Thus $u$ was accepted before $w$, forcing $w$ to be rejected, a contradiction.  Hence no outside vertex is accepted, and every vertex of $I$ is accepted because $I$ is independent.
\end{proof}

For background and computation it is also useful to condition on the first scanned vertex.  If $I\in\M(G)$ and $n=|V(G)|>0$, then
\begin{equation}\label{eq:first}
 \GG_G(I)=\frac1n\sum_{v\in I}
 \GG_{G-N[v]}(I\setminus\{v\}),
\end{equation}
with the empty-graph probability interpreted as $1$.  The recurrence is elementary: the first vertex must lie in $I$, and after accepting $v$ only the relative order on $G-N[v]$ remains relevant.  We use \eqref{eq:first} only as background and as a computational check, not in the shortest proof of \cref{thm:A}.

\section{Related work and novelty boundary}\label{sec:related}

Random-order greedy maximal independent sets have been studied in several forms.  Krivelevich, M\'esz\'aros, Michaeli and Shikhelman study the same uniformly random vertex-order process and its equivalent continuous arrival-time formulation, with emphasis on output size, density and local limits \cite{KrivelevichEtAl2020}.  Gazmuri's random greedy deletion algorithm is another equivalent finite implementation; Dall'Asta, Pin and Ramezanpour discuss it in a statistical-mechanics treatment of maximal independent sets and explicitly distinguish a dynamical measure from a flat blocked-state ensemble \cite{Gazmuri1984,DallAstaEtAl2009}.  Blocking random sequential adsorption and parking models supply further background on greedy occupation processes on graphs and trees \cite{Pippenger1989,DehlingEtAl2008,Sudbury2009,PenroseSudbury2005}.

For fixed deterministic graphs, the closest exact-uniformity source located in the final audit is Kryven, Versendaal and de Vries \cite{KryvenEtAl2026}.  Their iterative maximal independent set process chooses uniformly from currently available vertices and is therefore equivalent in law, on a fixed finite graph, to restricting a uniformly random permutation as vertices are removed.  Their Proposition~3.8 gives a \emph{sufficient} structural condition---regular independent sets---under which the terminal maximal independent set is uniform.  The checked version does not state the converse.  Their stronger 2-uniformity condition and its classification therefore do not classify all graphs with exactly uniform greedy output.  On connected trees their sufficient regular-independent-set hypothesis already forces ordinary vertex regularity, leaving only the trivial tree cases, but that one-way implication does not prove the necessity direction of \cref{thm:A}.

Other nearby results concern different observables.  Contat obtains an exact distributional identity for the \emph{size} of the greedy independent set on a uniform Cayley tree \cite{Contat2021}.  The RSA literature largely studies occupation probabilities and limiting densities rather than the complete terminal-set law.  Gadouleau and Kutner study the same deterministic greedy MIS map from an ordering; in particular, their $P_3$ example displays unequal permutation fibres, but their main questions concern reachability and fixing orders rather than random-output equiprobability \cite{GadouleauKutner2025}.  The maximal-independent-set counting inequality used below sits in standard counting territory; compare Sagan and Vatter's $m$-bound \cite{SaganVatter2006}.

These sources make several boundaries important.  We make no novelty claim for the greedy process, its random-priority representation, random sequential adsorption, the dynamic-versus-flat comparison, first-choice recurrences, elementary fibre counting, maximal-independent-set counting bounds, or diameter-end geometry.  The contribution-level claims are the exact tree obstruction in \cref{thm:A} and the exact mixed-spider formulas and tuned near-uniform sequence in \cref{thm:B}.  The literature search was broad but not exhaustive, so throughout we retain the assessment \emph{plausibly new with bounded uncertainty} rather than a priority claim.

\section{The exact obstruction on trees}\label{sec:Aproof}

We first record a simple counting fact.  Let $m(H)=|\M(H)|$, with $m(\varnothing)=1$.

\begin{lemma}\label{lem:count}
For every finite graph $H$ and vertex $z$,
\[
 m(H)\le 2m(H-z).
\]
\end{lemma}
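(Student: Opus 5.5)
We construct a map $\M(H)\to\M(H-z)$ that is at most two-to-one. For $I\in\M(H)$, split into two cases.

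\emph{Case $z\notin I$.} Then $I$ is independent in $H-z$. Every vertex $w\ne z$ of $H-z$ outside $I$ has a neighbour in $I$, and that neighbour is not $z$ because $z\notin I$. Hence $I\in\M(H-z)$, and we set $\phi(I)=I$.

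\emph{Case $z\in I$.} The set $I\setminus\{z\}$ is independent in $H-z$ but may fail to be maximal: the vertices of $N(z)$ that were dominated only by $z$ are now undominated. Extending $I\setminus\{z\}$ greedily would not give an injective rule, so we use a deterministic extension instead. Fix once and for all a linear order on $V(H)$. Let $\phi(I)$ be the output of the greedy algorithm on $H-z$ run from the set $I\setminus\{z\}$. Concretely, scan the vertices of $N(z)$ in the fixed order and add each one that has no neighbour in the current set; vertices outside $N(z)$ are already dominated by $I\setminus\{z\}$. The result is a maximal independent set $\phi(I)\in\M(H-z)$ with $\phi(I)\supseteq I\setminus\{z\}$. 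Moreover $\phi(I)\setminus (I\setminus\{z\})\subseteq N(z)$.

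\emph{Injectivity on each case.} On the first case $\phi$ is the identity, so it is injective. On the second case, $I$ can be recovered from $J=\phi(I)$ by $I=(J\setminus N(z))\cup\{z\}$. Indeed, $I\setminus\{z\}$ contains no vertex of $N(z)$, because $z\in I$ and $I$ is independent. Also, every vertex added during the extension lies in $N(z)$. So $J\setminus N(z)=I\setminus\{z\}$, and $\phi$ is injective on this case as well.

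Each fibre of $\phi$ therefore contains at most one set from each case, so it has at most two elements. This gives $m(H)\le 2m(H-z)$. The boundary case where $H-z$ is empty is covered by the convention $m(\varnothing)=1$: there $H=\{z\}$ and $m(H)=1\le 2$.

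The only subtle point is recovering $I$ in the case $z\in I$. It rests on the observation that both $I\setminus\{z\}$ avoiding $N(z)$ and the added vertices lying inside $N(z)$ are forced. Once this is noted, the argument needs no further casework.
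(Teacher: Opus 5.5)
Your proof is correct and is essentially the paper's argument: the paper splits on whether $z\in I$, identifies the sets containing $z$ with $\M(H-N[z])$ via $I\mapsto I\setminus\{z\}$, and injects these into $\M(H-z)$ by choosing any maximal independent extension, with injectivity coming from $\widehat J\cap V(H-N[z])=J$, which is exactly your recovery formula $J\setminus N(z)=I\setminus\{z\}$. Your remark that an ordinary greedy extension would fail to be injective is unnecessary, since your own recovery argument shows that any choice of extension gives an injective map; the fixed order only serves to make $\phi$ a well-defined function.
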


\begin{proof}
Split $\M(H)$ according to whether $z$ is present.  Maximal independent sets excluding $z$ are maximal in $H-z$, so there are at most $m(H-z)$ of them.  Those containing $z$, after deleting $z$, are in bijection with $\M(H-N[z])$.

It remains to show $m(H-N[z])\le m(H-z)$.  For each $J\in\M(H-N[z])$, choose a maximal independent extension $\widehat J$ in $H-z$.  This map is injective: maximality of $J$ in $H-N[z]$ forces
\[
 \widehat J\cap V(H-N[z])=J.
\]
Thus $m(H-N[z])\le m(H-z)$ and the claim follows.
\end{proof}

The tree geometry is equally elementary.

\begin{lemma}[Diameter-end structure]\label{lem:diameter}
Let $T$ have diameter at least $3$ and let $v_0v_1\cdots v_d$ be a diameter path.  Put $y=v_1$ and $z=v_2$.  Every neighbour of $y$ other than $z$ is a leaf.  Hence $y$ has $r\ge1$ leaf neighbours and exactly one nonleaf neighbour $z$.
\end{lemma}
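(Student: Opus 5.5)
The plan is a maximality argument on the diameter path. Fix the diameter path $v_0v_1\cdots v_d$ with $d\ge 3$, and put $y=v_1$, $z=v_2$. First, $v_0$ is a neighbour of $y$ different from $z$, and $v_0$ is a leaf: any neighbour of $v_0$ other than $y$ would extend the path beyond $v_0$. So $y$ has at least one leaf neighbour, which gives $r\ge 1$ once the main claim is proved.

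The main claim is that every neighbour $w\neq z$ of $y$ is a leaf. Take such a $w$ and suppose, for contradiction, that $w$ has a neighbour $x\neq y$. Since $T$ is a tree, $x$ does not lie on the path $y v_2\cdots v_d$. If it did, then $x,w,y$ together with the path segment from $y$ to $x$ would form a cycle. The one exception would be $x=y$, and that is excluded.

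The same acyclicity gives $w\notin\{v_2,\dots,v_d\}$. Indeed, $w\ne z=v_2$ by assumption. If $w=v_j$ for some $j\ge 3$, then the edge $yv_j$ together with $y v_2\cdots v_j$ would form a cycle.

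Hence $x\,w\,y\,v_2\cdots v_d$ is a path, with all vertices distinct by the tree property. It has $d+1$ edges, which contradicts the fact that $d$ is the diameter. So $w$ is a leaf.

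It remains to show that $z$ itself is not a leaf. Since $d\ge 3$, the vertex $z=v_2$ is adjacent to $v_3$ as well as to $y$, so $z$ has degree at least $2$. Therefore $y$ has exactly one nonleaf neighbour, namely $z$, and $r\ge 1$ leaf neighbours, one of which is $v_0$.

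There is no real obstacle. The only care needed is in checking that the extended walk $x\,w\,y\,v_2\cdots v_d$ is a genuine path, i.e.\ that its vertices are distinct. That is exactly where acyclicity of $T$ is used.
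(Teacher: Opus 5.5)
Your proof is correct and follows the paper's argument: both extend the diameter path past a nonleaf neighbour $w\neq z$ of $y$ to obtain a path of length $d+1$, contradicting maximality of $d$. The only differences are that you use an immediate neighbour $x$ of $w$ where the paper uses a leaf beyond $w$, and that you explicitly check, via $v_3$, that $z$ is not a leaf, which the paper leaves implicit.
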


\begin{proof}
The endpoint $v_0$ is a leaf.  If a neighbour $w\ne z$ of $y$ were nonleaf, the component beyond $w$ would contain a leaf $a$ with $\mathrm{dist}(a,y)\ge2$.  The path from $a$ through $y,z,\dots,v_d$ would then have length at least $d+1$, contradicting maximality of the diameter.
\end{proof}

Let $L$ be the set of leaf neighbours of $y$, with $r=|L|$.  Delete $L\cup\{y\}$ and let $R$ be the remaining component containing $z$; put $F=R-z=T-N[y]$.  The maximal independent sets containing $y$ are precisely
\[
 \{y\}\cup A,\qquad A\in\M(F),
\]
so there are $m(F)$.  The maximal independent sets excluding $y$ must contain every leaf in $L$ and are precisely
\[
 L\cup B,\qquad B\in\M(R),
\]
so there are $m(R)$.

\begin{lemma}[Two or more pendant leaves]\label{lem:multileaf}
If $r\ge2$, then the greedy law on $T$ is not uniform on $\M(T)$.
\end{lemma}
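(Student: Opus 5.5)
The plan is to compare the two laws on the single event $\{y\in I\}$, where $y=v_1$ is the support vertex of the pendant star from \cref{lem:diameter}. If $\GG_T$ were uniform, the greedy probability that $y$ lies in the output would equal the uniform probability
\[
 \UU_T(y\in I)=\frac{m(F)}{m(F)+m(R)},
\]
using the decomposition above: $m(F)$ maximal independent sets contain $y$ and $m(R)$ exclude it. I will show that the greedy probability is strictly smaller than $1/3$, while the uniform probability is at least $1/3$.

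\textbf{Uniform side.} Since $F=R-z$, \cref{lem:count} applied to $H=R$ and the vertex $z$ gives $m(R)\le 2m(F)$. Hence
\[
 \UU_T(y\in I)\ge \frac{m(F)}{m(F)+2m(F)}=\frac13.
\]
If $F$ is empty, the convention $m(\varnothing)=1$ keeps this valid. Note also that $R$ is nonempty because it contains $z$.

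\textbf{Greedy side.} Every leaf in $L$ has $y$ as its only neighbour. Such a leaf is therefore accepted exactly when it is scanned before $y$ has been accepted. So if any leaf of $L$ precedes $y$ in $\pi$, that leaf is accepted and $y$ is rejected. Consequently $y\in\mathrm{Greedy}(T,\pi)$ forces $y$ to precede all $r$ leaves of $L$, and
\[
 \GG_T(y\in I)\le \frac1{r+1}\le\frac13 .
\]

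\textbf{Strictness.} This is the only delicate point, since it is needed when $r=2$. The event that $y$ precedes all of $L$ contains orders in which $y$ is nevertheless rejected. For example, take any order beginning with $z$ and then $y$, followed by the leaves of $L$ and then everything else. In such an order $z$ is scanned first and accepted, so $y$ is rejected. This set of orders has positive probability, so
\[
 \GG_T(y\in I)<\frac1{r+1}\le\frac13\le \UU_T(y\in I).
\]
Summing over $I\ni y$ then shows $\GG_T\neq\UU_T$. The hypothesis $r\ge2$ is used exactly to make $1/(r+1)\le 1/3$ match the factor $2$ in \cref{lem:count}.
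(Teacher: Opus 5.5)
Your argument is correct and is essentially the paper's proof: greedy selection of $y$ forces $y$ to precede all of $L$, giving $\GG_T(y\in I)\le 1/(r+1)$, which is made strict by orders beginning $z,y$, against $\UU_T(y\in I)\ge 1/3$ from \cref{lem:count} applied to $(R,z)$. The only thing the paper adds is the case where $y$ has no nonleaf neighbour ($T$ a star, which is how \cref{thm:A} invokes this lemma for diameter $2$); there your $z$ does not exist, but the uniform marginal of $y$ is $1/2$ while the greedy marginal is $1/(r+1)<1/2$, so one extra line covers it.
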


\begin{proof}
If $y$ is greedily selected, it must precede every leaf in $L$, since any earlier leaf is accepted immediately and blocks $y$.  Hence
\[
 \Prob_{\GG}(y\text{ selected})\le\frac1{r+1}.
\]
When the nonleaf neighbour $z$ is present, the inequality is strict: among the orders in which $y$ is earliest in $L\cup\{y\}$ are orders beginning $z,y$, but $z$ is then accepted first and blocks $y$.

Under the uniform law on maximal independent sets,
\[
 \Prob_{\UU}(y\in I)=\frac{m(F)}{m(F)+m(R)}.
\]
Applying \cref{lem:count} to $(R,z)$ gives $m(R)\le2m(F)$ and hence $\Prob_{\UU}(y\in I)\ge1/3$.  Therefore, if $z$ exists and $r\ge2$,
\[
 \Prob_{\GG}(y\text{ selected})
 <\frac1{r+1}\le\frac13\le\Prob_{\UU}(y\in I).
\]
If there is no nonleaf neighbour, $T$ is a star with centre $y$.  Its only maximal independent sets are $\{y\}$ and $L$, so the uniform marginal of $y$ is $1/2$, while the greedy marginal is $1/(r+1)<1/2$.
\end{proof}

The remaining case is a support vertex with exactly one pendant leaf.  This is where we compare exact permutation fibres.

\begin{lemma}[One pendant leaf]\label{lem:oneleaf}
Assume the diameter is at least $3$ and $r=1$.  Let $x$ be the unique leaf neighbour of $y$, so locally $x-y-z$ and $\deg(y)=2$.  Then the greedy law on $T$ is not uniform on $\M(T)$.
\end{lemma}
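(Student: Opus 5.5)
The plan is to exhibit two specific maximal independent sets $I,I'\in\M(T)$ with $g_T(I')<g_T(I)$. I will get this from a strict injection $\mathcal F_T(I')\hookrightarrow\mathcal F_T(I)$ that swaps the positions of $x$ and $y$, and verify everything with the order certificate of \cref{lem:certificate}.

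\emph{Choice of the pair.} Since the diameter is at least $3$, the diameter path gives $z=v_2$ a neighbour $w=v_3\ne y$, and $w\in F=T-N[y]=T-\{x,y,z\}$. Extend $\{w\}$ to some $A\in\M(F)$ and put
\[
 I=\{y\}\cup A,\qquad I'=\{x\}\cup A .
\]
$I$ is one of the sets $\{y\}\cup A$, $A\in\M(F)$, described before \cref{lem:multileaf}. For $I'$, independence holds because $x$ is adjacent only to $y$. For maximality, $y$ is dominated by $x$, $z$ is dominated by $w\in A$, and every other vertex outside $I'$ lies in $F$ and is dominated by $A$. So $I'\in\M(T)$.

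\emph{The injection.} For an order $\pi$, let $\sigma(\pi)$ be the order obtained by exchanging the positions of $x$ and $y$; clearly $\sigma$ is injective. Take $\pi\in\mathcal F_T(I')$ and check the certificate for $I$ under $\sigma(\pi)$, vertex by vertex:
\begin{itemize}[nosep]
\item For $x\notin I$: we had $x\prec_\pi y$, so $y\prec_{\sigma(\pi)}x$, and $y\in I$ is a neighbour of $x$.
\item For $z\notin I$: its earlier $I'$-neighbour in $\pi$ must lie in $A$, since $x$ is not adjacent to $z$. Positions of vertices other than $x,y$ are unchanged, so that witness still precedes $z$ in $\sigma(\pi)$.
\item For any $v\in F\setminus A$: its witness lies in $A$ and is unaffected by the swap.
\end{itemize}
Hence $\sigma(\pi)\in\mathcal F_T(I)$.

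\emph{Strictness.} I need an order in $\mathcal F_T(I)$ that is not of the form $\sigma(\pi)$ with $\pi\in\mathcal F_T(I')$. Take the order that scans $y$, then $z$, then $x$, then the vertices of $A$, then the rest of $F$. Greedy accepts $y$, rejects $z$ and $x$, and then produces $A$ on $F$. So this order lies in $\mathcal F_T(I)$. Its preimage under $\sigma$ scans $x$ first and then $z$ before any vertex of $A$. In that preimage $z$ has no earlier neighbour in $I'$, so it is not in $\mathcal F_T(I')$. Therefore $g_T(I')<g_T(I)$, and the greedy law is not uniform.

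The only delicate step is choosing the pair correctly. The naive direction, mapping $\mathcal F_T(I)$ into $\mathcal F_T(I')$, fails exactly because $y$ may be $z$'s only early witness. So the injection must go from the $x$-set to the $y$-set, and $A$ must be forced to dominate $z$ through $w$. That forcing is precisely where the diameter hypothesis is used.
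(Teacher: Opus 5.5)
Your proof is correct and is essentially the paper's argument: the same transposition $\sigma$ of $x$ and $y$, checked with the order certificate, and the same strictness witness beginning $y,z$. The only difference is that you use $v_3$ to choose an $A\in\M(F)$ that dominates $z$, so you need only the paper's first case, whereas the paper fixes an arbitrary $A$ and handles the non-dominating case separately by the reverse injection $\sigma(\mathcal F_T(I_y))\subseteq\mathcal F_T(I_x)$.
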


\begin{proof}
Let
\[
 R=T-\{x,y\},\qquad F=R-z=T-N[y].
\]
Fix any $A\in\M(F)$ and set $I_y=\{y\}\cup A$.  Define
\[
 I_x=
 \begin{cases}
  \{x\}\cup A, & \text{if $A$ contains a neighbour of $z$ in $R$},\\
  \{x,z\}\cup A, & \text{otherwise}.
 \end{cases}
\]
Both are maximal independent sets.  Indeed, $I_y$ is maximal because $A$ is maximal in $F$ and $y$ dominates $x$ and $z$.  If $A$ dominates $z$, then $A$ is maximal in $R$, so $\{x\}\cup A$ is maximal in $T$; if $A$ does not dominate $z$, then $A\cup\{z\}$ is independent and maximal in $R$, so $\{x,z\}\cup A$ is maximal in $T$.

Let $\sigma$ be the transposition of the vertex labels $x$ and $y$, applied entrywise to an order.  It is a bijection on all vertex orders.  We compare fibres using \cref{lem:certificate}.

Suppose first that $A$ dominates $z$, so $I_x=\{x\}\cup A$.  For every $\pi\in\mathcal F_T(I_x)$, the swapped order $\sigma\pi$ lies in $\mathcal F_T(I_y)$.  To see this, note first that in $\pi$ the outside vertex $y$ can only be certified by its target neighbour $x$ among $\{x\}\cup A$, so $x\prec_\pi y$; after swapping, $y\prec_{\sigma\pi}x$, certifying the outside leaf $x$ for target $I_y$.  The outside vertex $z$ is certified in $\pi$ by some neighbour in $A$ because $x$ is not adjacent to $z$; that same witness remains earlier after the swap.  Every other outside vertex lies in $F\setminus A$ and is certified by a vertex of $A$, all of which are fixed by $\sigma$.  Thus
\[
 \sigma(\mathcal F_T(I_x))\subseteq\mathcal F_T(I_y).
\]
The inclusion is strict.  Consider an order beginning $y,z$, then listing all vertices of $A$, then the remaining vertices.  By \cref{lem:certificate} it lies in $\mathcal F_T(I_y)$.  Its swapped order begins $x,z$; both $x$ and $z$ are then accepted, so it cannot have output $I_x$, which excludes $z$.  Since $\sigma$ is an involution,
\[
 |\mathcal F_T(I_x)|<|\mathcal F_T(I_y)|.
\]

Now suppose that $A$ does not dominate $z$, so $I_x=\{x,z\}\cup A$.  The reverse inclusion holds:
\[
 \sigma(\mathcal F_T(I_y))\subseteq\mathcal F_T(I_x).
\]
Indeed, for an order in the fibre of $I_y$, the outside leaf $x$ is certified only by $y$, so after swapping the target vertex $x$ precedes and certifies the outside vertex $y$.  Every vertex in $F\setminus A$ is certified by a fixed vertex of $A$, while $z$ is now itself in the target.  Again the inclusion is strict.  An order beginning $z,y$, then listing $x$, the vertices of $A$, and the remaining vertices belongs to $\mathcal F_T(I_x)$; after swapping it begins $z,x$, so $z$ is accepted although $z\notin I_y$.  Hence
\[
 |\mathcal F_T(I_y)|<|\mathcal F_T(I_x)|.
\]

In either case two maximal independent sets have different fibre cardinalities, so their greedy probabilities differ.
\end{proof}

\begin{proof}[Proof of \cref{thm:A}]
For $K_1$ there is a single maximal independent set.  For $K_2$ the first of the two vertices is accepted and blocks the other, so the two singleton maximal independent sets each have probability $1/2$.

Now let $|V(T)|\ge3$.  If $T$ has diameter $2$, then it is a star with at least two leaves and \cref{lem:multileaf} applies.  If the diameter is at least $3$, use \cref{lem:diameter}.  When the support vertex $y$ has at least two leaf neighbours, \cref{lem:multileaf} gives nonuniformity; when it has exactly one leaf neighbour, \cref{lem:oneleaf} gives two maximal independent sets with unequal permutation-fibre sizes.  Thus no tree on at least three vertices has uniform greedy terminal law.
\end{proof}

\section{Mixed spiders and exact output probabilities}\label{sec:spider}

Fix $k\ge0$ and $l\ge1$.  The mixed spider $T_{k,l}$ has centre $c$, length-two arms $c-u_i-v_i$ for $1\le i\le k$, and direct leaves $w_1,\dots,w_l$ adjacent to $c$.

A maximal independent set containing $c$ is forced to contain every outer endpoint $v_i$, and contains no $u_i$ or direct leaf.  Thus there is exactly one centre-containing maximal independent set.  If $c$ is absent, every direct leaf must be present and each arm contributes exactly one of $u_i,v_i$.  Hence there are $2^k$ non-centre maximal independent sets and
\[
 |\M(T_{k,l})|=2^k+1.
\]

Put $N=2^k$.  Temporarily ignore $c$.  On each arm the earlier of $u_i$ and $v_i$ is selected, and the relative orders on the $k$ disjoint pairs are equiprobable.  Thus any prescribed arm pattern has probability $1/N$.

Fix a non-centre pattern containing exactly $j$ inner vertices $u_i$.  Conditional on the required pair orientations, reinserting $c$ destroys that pattern precisely when $c$ is selected.  This happens exactly when $c$ precedes every direct leaf and, on each of the $j$ inner-selected arms, precedes the corresponding $u_i$.  Under the conditioning $u_i\prec v_i$, this is equivalent to $c$ being first among the $l+2j+1$ vertices consisting of $c$, all $l$ direct leaves, and both endpoints of those $j$ arms.  Therefore
\[
 \Prob(c\text{ is selected}\mid\text{prescribed arm pattern})
 =\frac1{l+2j+1}.
\]
It follows that each particular type-$j$ non-centre maximal independent set has probability
\begin{equation}\label{eq:qj}
 q_j=\frac1N\left(1-\frac1{l+2j+1}\right)
 =\frac{l+2j}{N(l+2j+1)}.
\end{equation}
There are $\binom{k}{j}$ such patterns.  Summing the complementary centre-selected contributions over all arm patterns gives
\begin{equation}\label{eq:pc}
 p_c=\frac1N\sum_{j=0}^k\binom{k}{j}\frac1{l+2j+1}.
\end{equation}
This proves the counting and exact-probability parts of \cref{thm:B}.

\section{The tuned bias and its asymptotics}\label{sec:tuned}

Now assume $k\ge1$.  Set $N=2^k$, choose
\[
 l=N-k,
\]
and put $A=N+1$.  Since $k\ge1$, the elementary inequality $k<2^k$ shows that $l>0$.  Let $J\sim\mathrm{Bin}(k,1/2)$ and $W=2J-k$.  The uniform mass on each maximal independent set is $1/A$.

For a type-$J$ non-centre output, \eqref{eq:qj} becomes
\[
 q_J=\frac{N+W}{N(A+W)},
\]
and direct subtraction gives
\begin{equation}\label{eq:qdiff}
 q_J-\frac1A
 =\frac{W}{NA(A+W)}.
\end{equation}
Meanwhile, \eqref{eq:pc} is exactly
\[
 p_c=\E\frac1{A+W}.
\]
Since $\E W=0$,
\begin{align}
 p_c-\frac1A
 &= -\E\frac{W}{A(A+W)} \\
 &= \E\frac{W^2}{A^2(A+W)}.\label{eq:pdiff}
\end{align}
The second identity follows by adding the zero term $\E(W/A^2)$.  Its right-hand side is positive because $A+W>0$ and $W$ is nonzero with positive probability.

Using the multiplicities $\binom{k}{j}$ to convert the non-centre sum to expectation under $J$, \eqref{eq:qdiff} and \eqref{eq:pdiff} yield the exact identity
\begin{equation}\label{eq:biasidentity}
 b(T_{k,N-k})
 =\frac12\left(
 \E\frac{W^2}{A^2(A+W)}
 +\E\frac{|W|}{A(A+W)}
 \right)>0.
\end{equation}
This is the exact expectation identity in the frozen theorem package.

For the upper bound, $W\ge-k$, so $A+W\ge A-k$, while
\[
 \E W^2=k,
 \qquad
 \E|W|\le\sqrt{\E W^2}=\sqrt{k}.
\]
Consequently
\begin{equation}\label{eq:biasbound}
 b(T_{k,N-k})
 \le\frac12\left(
 \frac{k}{A^2(A-k)}
 +\frac{\sqrt{k}}{A(A-k)}
 \right)
 =O\!\left(\frac{\sqrt{k}}{4^k}\right).
\end{equation}
Finally, the vertex count is
\[
 n_k=N+k+1.
\]
For $k\ge1$, $N\le n_k\le2N$, while $k=\log_2N=O(\log n_k)$.  Substituting these comparisons into \eqref{eq:biasbound} gives
\[
 b(T_{k,2^k-k})
 =O\!\left(\frac{\sqrt{\log n_k}}{n_k^2}\right),
\]
completing \cref{thm:B}.

\section{Computational checks}\label{sec:computation}

Computation played a supporting role in theorem discovery and verification, but not in the all-tree proof.  Exact rational arithmetic was used in two independent ways on small trees: direct enumeration of every vertex permutation, and the first-vertex recurrence \eqref{eq:first}.  These methods agree on every nonisomorphic tree through seven vertices.  A broader exact census generated all $436$ nonisomorphic trees on one through eleven vertices (matching OEIS A000055 \cite{OEISA000055}); among them, exactly $K_1$ and $K_2$ have zero bias.

For the tuned mixed-spider family, some exact values are shown in \cref{tab:spider}.  These are checks of the closed formulas, not inputs to their proof.

\begin{table}[ht]
\centering
\caption{Selected exact biases for $T_{k,2^k-k}$.}\label{tab:spider}
\begin{tabular}{rrrr}
\toprule
$k$ & $l$ & $|V|$ & $b(T)$ \\
\midrule
2 & 2 & 7 & $1/30$ \\
3 & 5 & 12 & $7/576$ \\
4 & 12 & 21 & $41/13260$ \\
6 & 58 & 71 & $18353/78602160$ \\
10 & 1014 & 1035 & $22598757953479/19219192053894201600$ \\
\bottomrule
\end{tabular}
\end{table}

A targeted check of the structural proof was also run on all $434$ nonisomorphic trees of orders three through eleven.  In the one-pendant-leaf branch, $1103$ paired maximal-independent-set comparisons had the strict fibre-probability direction predicted by \cref{lem:oneleaf}.  These finite checks are corroborative only; \cref{sec:Aproof} is the proof for arbitrary finite trees.

\section{Lean formalisation and Palomar registration}\label{sec:formal}

The frozen A+B theorem package has been formalised in Lean.  The Stage-5 verification baseline used Lean~4.34.1 and Mathlib commit
\begin{center}
\nolinkurl{d13f23b723b8a846827a245b89c10fc7d3f11612}.
\end{center}
The final Theorem~A declarations are
\begin{center}
\nolinkurl{GreedyUniformity.tree_greedyLawEqUniform_iff_isK1OrK2}\\
\nolinkurl{GreedyUniformity.tree_bias_eq_zero_iff_isK1OrK2}.
\end{center}
The mixed-spider package contains the maximal-independent-set count, the centre and non-centre probability formulas, the tuned exact bias identity and positivity theorem, and the two asymptotic Big-O statements stated above.

For Palomar packaging, Stage~6 migrated the project to Lean~4.35.0-rc3 and Mathlib commit
\begin{center}
\nolinkurl{c55e6e786f49471c72fbddbec5415808896aec1e}.
\end{center}
This was a compatibility migration and did not strengthen or alter the frozen theorem statements.  The package was registered as
\[
 \texttt{PALOMAR-2026-10-01-000003},\qquad \text{version }1,
\]
with trust level \emph{high}.  The immutable registered source commit is
\begin{center}
\nolinkurl{1733c29a5165148d71a2f0bd1ed2dcd7c35309ce}.
\end{center}
The public registry entry is
\begin{center}
\url{https://palomar-registry.org/entry.html?id=PALOMAR-2026-10-01-000003&version=1}.
\end{center}
The project source is available at\begin{center}
\url{https://github.com/jfairfaxball-348/Greedy-Uniformity-on-Trees}.
\end{center}
All nine intended final theorem declarations---two for Theorem~A and seven mixed-spider checkpoints---are included in the registered comparator surface.  The permitted axiom reports for these declarations are exactly
\[
 [\texttt{propext},\ \texttt{Classical.choice},\ \texttt{Quot.sound}],
\]
and there are no project-specific mathematical axioms in the final theorem package.

These records should be interpreted narrowly.  Lean checks the formal derivations against the specified foundations and dependencies, and Palomar records/verifies the packaged formal statements under its tooling.  Neither constitutes peer review, a literature-priority determination, or a certificate that no stronger theorem is known.

\section{Limitations and open directions}\label{sec:open}

The results leave a natural extremal problem.  Define
\[
 a_n=\min\{b(T): T\text{ is a tree on }n\text{ vertices}\}.
\]
Theorem~\ref{thm:A} says only that $a_n>0$ for every $n\ge3$, while Theorem~\ref{thm:B} supplies a small-bias subsequence at the orders $n_k=2^k+k+1$.  We do not determine $a_n$, identify its minimisers, give a matching lower bound, or extend the mixed-spider estimate to every $n$.  The small-order census in \cref{sec:computation} is not evidence for a general minimizer theorem.

It would also be interesting to classify exact uniformity beyond trees.  Kryven--Versendaal--de Vries give a strong sufficient condition \cite{KryvenEtAl2026}, but the general converse problem for the complete random-greedy terminal law is different.  The permutation-fibre viewpoint used in \cref{lem:oneleaf} suggests that local involutions and strict fibre embeddings may be useful in other sparse graph classes.

\section*{Acknowledgements}

The author thanks the developers and maintainers of Lean~4 and Mathlib, and the developers of Palomar's comparator and verification tooling.  AI assistance via ChatGPT was used materially in research planning, literature-audit support, proof development, Lean formalisation, repository maintenance, and manuscript preparation; the author retains responsibility for the mathematical claims, citations, and final text.

\bibliographystyle{plainnat}
\bibliography{references}

\end{document}